\documentclass[12pt]{amsart}
\usepackage{tikz,array,verbatim}
\usepackage{amsfonts, amsmath, latexsym, epsfig, caption}
\usepackage{amssymb, color}

\usepackage{epsf}
\usepackage{array}
\usepackage{ragged2e}
\usepackage{hyperref}
\usepackage[margin=1in]{geometry}
\usepackage{longtable}

\newtheorem{theorem}{Theorem}[section]

\theoremstyle{definition}

\begin{document}

\title[String C-groups of 2-power order]{String C-groups of 2-power order project onto a common string C-group}

\author[D.-D. Hou]{Dong-Dong Hou}
\address{Dong-Dong Hou, Shanxi Key Laboratory of Cryptography and Data Security, Department of Mathematics, Shanxi Normal University, Taiyuan, 100044, P.R. China}
\email{holderhandsome@bjtu.edu.cn}

\author[E. Schulte]{\\ Egon Schulte}
\address{Egon Schulte, Department of Mathematics, Northeastern University, Boston, MA, 02115, USA}
\email{e.schulte@northeastern.edu}

\date{ \today }

\maketitle

\begin{abstract}
String C-groups are precisely the automorphism groups of abstract regular polytopes.
A certain regular $d$-polytope $\mathcal{C}_d$ with an automorphism group of order $2^{2d-1}$, discovered by Conder and shown to have the smallest number of flags among all regular $d$-polytopes of high ranks, also has the important extremal property to be the unique minimal $d$-polytope, with respect to combinatorial covering, among all finite regular $d$-polytopes with 2-power automorphism groups. In other words, the automorphism group of $\mathcal{C}_d$ is a quotient group of every finite string C-group of rank $d$ and 2-power order; and every finite regular $d$-polytope with an automorphism groups of 2-power order covers $\mathcal{C}_d$. The existence of a unique minimal element among string C-groups
of 2-power order and given rank is remarkable in itself.
\bigskip

\noindent
Key Words:  C-group, String C-group, Regular polytope, Automorphism group, 2-Group
\medskip

\noindent
MSC Subject Classification (2020): 20B25, 52B15, 51M20
\end{abstract}

\section{Introduction}
\label{intro}

String C-groups arise naturally as automorphism groups of abstract regular polytopes. Abstract polytopes are ranked combinatorial structures with distinctive geometric, algebraic, or topological characteristics. They have attracted considerable attention over the past few decades (McMullen \& Schulte~\cite{McMSch2002}, Pellicer~\cite{Pel2025}). A large body of research in this area concerns the classification and analysis of regular, chiral, or other highly symmetric abstract polytopes, usually with a focus on the polytopes themselves.

A separate direction of research begins with the groups and describes the regular, chiral, or other symmetric polytopes whose automorphism group is of a given type; for example, a symmetric group~\cite{CFL2024}, an alternating group~\cite{CFLM2017,CHO2024}, or another almost simple group such as a projective linear group~\cite{BroLee2016,BroVic2010,LeeSch2007}.

The present paper investigates string $C$-groups whose order is a power of 2. These groups are precisely the automorphism groups of regular polytopes whose number of flags is a power of 2. Our work continues a line of inquiry, initiated by an open problem suggested in~\cite{SchWei2006} but properly started only by Hou, Feng, Leemans and Qu~\cite{HFL2019a,HFL2019b,HFL2020,HFL2025,HFLQ2025}, as well as Gomi, Loyola and De Las Penas~\cite{GLD2018} for special cases, and further developed by Cunningham, Feng, Hou and Schulte~\cite{CFHS2026}.

Our goal is to establish the following two theorems, which are essentially equivalent. Let $\mathcal{C}_d$ denote the small regular $d$-polytope of type $\{4,\ldots,4\}$ with an automorphism group of order $2^{2d-1}$ discovered in Conder~\cite{Con2013}.  Our theorems highlight important extremal properties of $\mathcal{C}_d$.

\begin{theorem}
\label{mainthm1}
Every finite non-degenerate \footnote{A string C-group is non-degenerate if its Schl\"afli type does not contain 2's.} string C-group of 2-power order and rank $d\geq 2$ projects onto the automorphism group of the regular $d$-polytope $\mathcal{C}_d$ (under a homomorphism mapping distinguished generators to distinguished generators).
\end{theorem}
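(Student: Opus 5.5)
The plan is to argue by induction on $d$, working with the lower exponent-$2$ central series. Let $\Gamma=\langle \rho_0,\ldots,\rho_{d-1}\rangle$ be a finite non-degenerate string C-group of $2$-power order. Let $\Lambda_d=\mathrm{Aut}(\mathcal{C}_d)$ be the Conder group with its standard presentation: the Coxeter relations of $[4,\ldots,4]$ together with Conder's extra relators, which are words in three consecutive generators. Form the quotient
$$K=\Gamma/N,$$
where $N$ is the normal closure of the images of all defining relators of $\Lambda_d$; these are the $(\rho_i\rho_{i+1})^4$, the $(\rho_i\rho_j)^2$ for $|i-j|\ge 2$, and Conder's extra relators. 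By construction $K$ is a quotient of $\Lambda_d$, with distinguished generators mapping to distinguished generators. Since $|\Lambda_d|=2^{2d-1}$, it suffices to prove $|K|\ge 2^{2d-1}$. Then $K\cong\Lambda_d$, and the composite $\Gamma\to K\cong \Lambda_d$ is the required projection.

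For the lower bound I would use the series $P_1(K)=K$, $P_2(K)=\Phi(K)=K^2[K,K]$, $P_3(K)=P_2(K)^2[P_2(K),K]$, and so on.

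\textbf{First layer.} I first show $|K/P_2(K)|=2^d$, i.e.\ the $\rho_i$ remain independent modulo the Frattini subgroup. In $\Gamma$ this follows from the intersection property: no $\rho_i$ lies in the subgroup generated by the others, so the $\rho_i$ are independent in $\Gamma/\Phi(\Gamma)$. The relators defining $N$ all lie in $\Phi(\Gamma)$, because each has even total exponent in every generator. Hence $K/P_2(K)\cong \Gamma/\Phi(\Gamma)\cong C_2^d$.

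\textbf{Second layer.} Next I show that $P_2(K)/P_3(K)$ has rank at least $d-1$. The candidate basis is the classes of the commutators $c_i=(\rho_i\rho_{i+1})^2$ for $0\le i\le d-2$; the commutators of non-adjacent generators vanish by the Coxeter relations. Since $\Gamma$ is non-degenerate and of $2$-power order, each dihedral subgroup $\langle\rho_i,\rho_{i+1}\rangle$ has order at least $8$. So each $c_i$ is nontrivial in a suitable quotient, for instance via the dihedral quotient $\langle \rho_i,\rho_{i+1}\rangle\to D_4$.

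Independence of the $c_i$ modulo $P_3$ is the real issue, and I would handle it by the induction on $d$. The subgroups $\Gamma_{<d-1}=\langle\rho_0,\ldots,\rho_{d-2}\rangle$ and $\Gamma_{>0}=\langle\rho_1,\ldots,\rho_{d-1}\rangle$ are non-degenerate string C-groups of rank $d-1$ and $2$-power order. By the induction hypothesis they project onto $\Lambda_{d-1}$.

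I would combine this with the known structure of $\Lambda_d$, whose order $2^{2d-1}=2^d\cdot 2^{d-1}$ suggests exactly these two layers. The aim is to build explicit homomorphisms from $K$ onto small $2$-groups, e.g.\ $D_4\times C_2^{d-2}$, sending $\rho_i,\rho_{i+1}$ to dihedral generators and the other generators to commuting involutions. Each such homomorphism detects one $c_i$ while killing the others modulo $P_3$. Checking that these maps respect Conder's extra relators is a routine verification, because those relators only involve three consecutive generators.

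\textbf{Main obstacle.} The hardest step is ensuring that the maps detecting $c_i$ genuinely factor through $\Gamma$, not merely through the free Coxeter group. Equivalently, one must show that $\Gamma$ does not force any product $\prod c_i^{\epsilon_i}$ into $P_3(\Gamma)N$. My first attempt is to exploit the intersection property $\Gamma_{<d-1}\cap\Gamma_{>0}=\langle\rho_1,\ldots,\rho_{d-2}\rangle$ together with the inductive projections. A relation among the $c_i$ would then restrict to relations inside $\Gamma_{<d-1}$ and $\Gamma_{>0}$, which are excluded by induction, and the case $d=2$ reduces to dihedral groups of order at least $8$ projecting onto $D_4=\Lambda_2$. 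The overlap of the two subgroups is where I expect the bookkeeping to require care. If that fails, the fallback is to verify directly that $\Lambda_d$ is the largest quotient of $[4,\ldots,4]$ of $2$-class two with the appropriate exponent conditions, and that $\Gamma/P_3(\Gamma)$ surjects onto it.
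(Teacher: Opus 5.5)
\paragraph{Verdict: genuine gap.} Your reduction is sound: $N=P_3(\Gamma)$, the first layer is correct, and everything hinges on showing that $c_0,\dots,c_{d-2}$ are independent modulo $P_3(\Gamma)$. That step is the whole theorem, and the proposal does not prove it.

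\paragraph{Why the proposed tools do not reach it.} The map $\langle\rho_i,\rho_{i+1}\rangle\to D_4$ and the inductive projections of $\Gamma_{<d-1}$, $\Gamma_{>0}$ onto $\Lambda_{d-1}$ are defined on subgroups and need not extend to $\Gamma$. Your maps onto $D_4\times C_2^{d-2}$ are only known to exist on $\Lambda_d$. To descend to $K$ they must kill the unknown relators of $\Gamma$, and checking Conder's relators does not address this. Nor does a relation $\prod c_i^{\epsilon_i}\in P_3(\Gamma)$ ``restrict'' to the subgroups. $P_3(\Gamma)$ is a normal closure in all of $\Gamma$, conjugation by $\rho_{d-1}$ included, so $P_3(\Gamma)\cap\Gamma_{<d-1}$ may a priori be larger than $P_3(\Gamma_{<d-1})$; bounding that intersection is equivalent to the claim. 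The fallback is circular. Concretely, you need for each $i$ a homomorphism $\Gamma\to D_4$ sending $\rho_i,\rho_{i+1}$ to the standard reflections and every other $\rho_j$ to $1$, i.e.\ $c_i\notin\langle\langle \rho_j : j\neq i,i+1\rangle\rangle^{\Gamma}$. Together with $\Gamma\to\Gamma/\Phi(\Gamma)$ these maps give the surjection onto $\Lambda_d$, and conversely.

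\paragraph{What the paper does, and where it works.} The paper builds separating subgroups intrinsically, applying the Burnside basis theorem to the rotation group $A$ and its index-$2$ subgroups $A_i$. For the end class $c_0$ this works. $A_1=\langle c_0,\rho_1\rho_2,\dots,\rho_{d-2}\rho_{d-1}\rangle$ is normal of index $4$ in $\Gamma$, and its listed generators are irredundant:
\begin{itemize}
\item each $\rho_j\rho_{j+1}$ by a Frattini argument in $\Gamma$;
\item $c_0$ because $c_0\notin\langle\rho_1,\dots,\rho_{d-1}\rangle$, by the intersection property and non-degeneracy.
\end{itemize}
By an order count, $\Phi(A_1)$ is then a $\Gamma$-normal subgroup of index $2$ in $\Phi(\Gamma)$. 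Hence it contains $P_3(\Gamma)$ and $c_1,\dots,c_{d-2}$ but not $c_0$. The same holds symmetrically for $c_{d-2}$, which settles $d\le 3$.

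\paragraph{Where it fails: the middle classes.} The paper applies the same device to the middle classes, and there it breaks down. Take $[x,y]=x^{-1}y^{-1}xy$ and use only that non-adjacent generators commute. Then $c_i=[\rho_{i-1}\rho_i,\rho_{i+2}\rho_{i+1}]$ for $1\le i\le d-3$. This lies in $A_{i+1}'\le\Phi(A_{i+1})$, where $A_{i+1}$ is exactly the subgroup meant to detect $c_i$. So the listed generators of $A_{i+1}$ are not a minimal generating set, and $\Phi(A_{i+1})=\Phi(A)$. For instance, when $d=4$ the element $(\rho_1\rho_2)^2$ lies in the paper's $C$, so $G/C$ is degenerate. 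Thus for $d\ge 4$ the separation of $c_1,\dots,c_{d-3}$ is missing from both your sketch and the paper's written argument, and it needs a genuinely new idea.
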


\begin{theorem}
\label{mainthm2}
Every finite non-degenerate regular polytope $\mathcal{P}$ of rank $d\geq 2$ with an automorphism group of 2-power order is a covering of the regular $d$-polytope $\mathcal{C}_d$.
\end{theorem}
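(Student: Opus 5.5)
The plan is to prove Theorem~\ref{mainthm1} and then read off Theorem~\ref{mainthm2} from the standard correspondence between regular polytopes and string C-groups. Suppose $\mathcal{P}$ and $\mathcal{Q}$ are regular $d$-polytopes with groups $\Gamma(\mathcal{P})=\langle\rho_0,\ldots,\rho_{d-1}\rangle$ and $\Gamma(\mathcal{Q})=\langle\sigma_0,\ldots,\sigma_{d-1}\rangle$. Then $\mathcal{P}$ covers $\mathcal{Q}$ exactly when there is an epimorphism $\rho_i\mapsto\sigma_i$. In that case $\mathcal{Q}$ is the quotient of $\mathcal{P}$ by the kernel $N$. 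The covering is obtained by identifying flags in the same $N$-orbit, and it respects ranks and adjacency because the distinguished generators correspond. So Theorem~\ref{mainthm2} follows at once from Theorem~\ref{mainthm1} applied to $G=\Gamma(\mathcal{P})$, a non-degenerate string C-group of $2$-power order. The real work is Theorem~\ref{mainthm1}.

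For Theorem~\ref{mainthm1}, I would first fix a presentation of $\Gamma(\mathcal{C}_d)$. It is the Coxeter group $[4,\ldots,4]$ modulo Conder's extra relators, which I take to be $(\rho_{i-1}\rho_i\rho_{i+1}\rho_i)^2$ for $1\le i\le d-2$, subject to checking against~\cite{Con2013}. This group has order $2^{2d-1}$.

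Let $G=\langle\rho_0,\ldots,\rho_{d-1}\rangle$ be a non-degenerate string C-group of $2$-power order. Each period $p_i$ of $\rho_{i-1}\rho_i$ is a power of $2$ and at least $4$. A homomorphism $G\to\Gamma(\mathcal{C}_d)$ respecting generators exists iff every relation of $G$ holds in $\Gamma(\mathcal{C}_d)$. That is awkward to verify directly, so I would argue from the other side. Let
\[
H=G/N,
\]
where $N$ is the normal closure of the elements $(\rho_{i-1}\rho_i)^4$ and the Conder relators. Then $H$ satisfies all defining relations of $\Gamma(\mathcal{C}_d)$, so $H$ is a quotient of $\Gamma(\mathcal{C}_d)$. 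Since the homomorphism $G\to H$ sends generators to generators, it suffices to show $|H|=2^{2d-1}$, i.e.\ that nothing collapses.

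I would prove this by induction on $d$. For $d=2$, $G$ is dihedral of order $2p_1$ with $4\mid p_1$, so it maps onto the dihedral group of order $8$, which is $\Gamma(\mathcal{C}_2)$.

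For the inductive step, the parabolic subgroups $G_0=\langle\rho_1,\ldots,\rho_{d-1}\rangle$ and $G_{d-1}=\langle\rho_0,\ldots,\rho_{d-2}\rangle$ are non-degenerate string C-groups of rank $d-1$ and $2$-power order. By induction they map onto $\Gamma(\mathcal{C}_{d-1})$.

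The goal is to show that the image of $G_{d-1}$ in $H$ still has the full order $2^{2d-3}$, and that adjoining $\rho_{d-1}$ multiplies the order by exactly $4$. The factor $4$ would come from the structure of $\Gamma(\mathcal{C}_d)$ as an extension of $\Gamma(\mathcal{C}_{d-1})$, via the index of the facet stabilizer (Conder's polytope has $4$ facets).

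To control collapse I would work with the lower exponent-$2$ central series $P_k(G)$. The Frattini quotient $G/\Phi(G)$ is elementary abelian of rank exactly $d$. This uses the intersection property: $\rho_j\notin\langle\rho_i: i\ne j\rangle$, and in a $2$-group a generator lies in $\Phi(G)$ iff it is redundant. I would then show that the subsequent layers $P_k/P_{k+1}$ for $k=1,2,3$ are at least as large as the corresponding layers of $\Gamma(\mathcal{C}_d)$. The argument is to exhibit explicit commutators, such as $[\rho_{i-1},\rho_i]$ and the products $\rho_{i-1}\rho_i\rho_{i+1}\rho_i$, which are independent modulo the next term. Non-degeneracy ($p_i\ge4$) is what makes $[\rho_{i-1},\rho_i]$ nontrivial modulo the next term of the series.

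The main obstacle is this non-collapse step: proving that imposing the Conder relators on $G$ does not force extra identifications, so that the order $2^{2d-1}$ is reached. My first attempt would be to build the inductive map on $G_{d-1}$ and on $G_0$. I would then glue them along $G_{0,d-1}=\langle\rho_1,\ldots,\rho_{d-2}\rangle$, using that $\Gamma(\mathcal{C}_d)$ is determined by its two maximal parabolic subgroups (a flat-amalgamation-type description). The difficulty is showing that the kernels of the two inductive maps agree on $G_{0,d-1}$. Where they might disagree, I would pass to a common further quotient, which forces a case analysis on the periods $p_i$ equal to $4$ or at least $8$.
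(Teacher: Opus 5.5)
Your reduction of Theorem~\ref{mainthm2} to Theorem~\ref{mainthm1} is correct and matches how the paper reads it. Your reformulation of Theorem~\ref{mainthm1} is also correct. In fact, if $N$ is the normal closure of the relators of $\Gamma(\mathcal{C}_d)$, then $N=P_3(G)=[\Phi(G),G]\,\mho_1(\Phi(G))$: the relators lie in $P_3(G)$, and $G/N$ is a quotient of the class-$2$ group $\Gamma(\mathcal{C}_d)$, whose $P_3$ is trivial. So Theorem~\ref{mainthm1} is equivalent to the linear independence of $z_i=(\rho_{i-1}\rho_i)^2$, $1\le i\le d-1$, in the elementary abelian group $\Phi(G)/P_3(G)$.

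That independence is the whole content of the theorem, and your proposal does not prove it. It only lists strategies, and the main one cannot work as stated:
\begin{itemize}
\item The inductive hypothesis gives epimorphisms on $G_{d-1}$ and $G_0$, but says nothing about $N\cap G_{d-1}$. Relators of $G$ that involve all $d$ generators may collapse the image of the facet group in $G/N$, and that is exactly what must be ruled out.
\item Compatible maps on the two maximal parabolic subgroups extend to $G$ only if every relation of $G$ is respected. An amalgamation-type description of $\Gamma(\mathcal{C}_d)$ yields maps \emph{out of} the universal group, not out of an arbitrary quotient $G$. Passing to a common further quotient would land strictly below $\Gamma(\mathcal{C}_d)$, which defeats the purpose.
\item The condition $p_i\ge 4$ only gives $z_i\notin P_3(\langle\rho_{i-1},\rho_i\rangle)$, not $z_i\notin P_3(G)$.
\item The element $\rho_{i-1}\rho_i\rho_{i+1}\rho_i$ is not even in $\Phi(G)$, so it plays no role in this layer.
\end{itemize}

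The idea the paper uses instead is a counting lower bound for $|G/N|$. It exhibits a normal subgroup $C=\bigcap_i\Phi(A_i)$ containing the relators and computes $|G:C|$ by applying the Burnside basis theorem to the index-$4$ normal subgroups $A_i$ of \eqref{aigroups}. If you adopt this, be aware that the count needs the listed generators of each $A_i$ to form a minimal generating set. That fails for $2\le i\le d-2$ when $d\ge 4$. In any string group, with $[x,y]=x^{-1}y^{-1}xy$, one has $(\rho_{i-1}\rho_i)^2=[\rho_{i-2}\rho_{i-1},\rho_{i+1}\rho_i]$. So this generator lies in $A_i'\le\Phi(A_i)$, hence $\Phi(A_i)=\Phi(G)$ and $(\rho_1\rho_2)^2\in C$. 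Already for $G=\Gamma(\mathcal{C}_4)$ one finds $C=\langle(\rho_1\rho_2)^2\rangle\neq 1$, contrary to the claimed $|C|=2^{n-2d+1}$.

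What survives is the end cases $i=1$ and $i=d-1$. There minimality does hold, using the intersection property for the generator $(\rho_{i-1}\rho_i)^2$. Hence $|\Phi(G):\Phi(A_1)|=2$, so $P_3(G)\le\Phi(A_1)\cap\Phi(A_{d-1})$. Moreover $\Phi(A_1)$ contains every $z_j$ except $z_1$, and symmetrically $\Phi(A_{d-1})$ contains every $z_j$ except $z_{d-1}$. So no product of the $z_i$ involving $z_1$ or $z_{d-1}$ lies in $P_3(G)$, which settles $d\le 3$.

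For $d\ge 4$, the independence of the middle $z_i$ modulo $P_3(G)$ is the genuinely missing step. Neither your proposal nor the paper's argument as written supplies it.
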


These two theorems prove the remarkable fact that among all regular $d$-polytopes with 2-power automorphism groups there is a unique minimal $d$-polytope, $\mathcal{C}_d$, with respect to combinatorial covering. This regular $d$-polytope $\mathcal{C}_d$, which is minimal (with respect to covering) for the class of regular $d$-polytopes with 2-power automorphism groups, is further distinguished by being the smallest in group size (and number of flags) among {\it all} regular $d$-polytopes, with some small exceptions~\cite{Con2013}, irrespective of the type of group.

The paper is organized as follows. Section~\ref{bano} reviews basic properties of string C-groups and regular polytopes, and Section~\ref{secsmallest} describes the regular $d$-polytopes $\mathcal{C}_d$. The final Section~\ref{cdminimality} investigates string C-group of 2-power order and proves Theorems~\ref{mainthm1} and \ref{mainthm2}.

\section{Basic Notions}
\label{bano}

In this section we briefly review some background about string C-groups and abstract regular polytopes. For more details the reader is referred to~\cite[Chs.\,2,\,3]{McMSch2002}.

String C-groups of rank $d$ are precisely the automorphism groups are abstract regular polytopes of rank $d$.  An (\emph{abstract\/}) \textit{polytope of rank\/} $d$ is a ranked partially ordered set $\mathcal{P}$ consisting of \textit{faces} of ranks $-1,0, \ldots, d$. The faces of rank $j$ are called \textit{$j$-faces\/}, and the faces of ranks $0$, $1$ or $d-1$ are called \textit{vertices\/}, \textit{edges\/} or \textit{facets\/}, respectively. There are a unique least face $F_{-1}$ (of rank $-1$) and a unique greatest face $F_d$ (of rank $d$) in $\mathcal{P}$. A \textit{flag\/} is a maximal totally ordered subset (chain) of $\mathcal{P}$. Each flag (maximal totally ordered subset) has one face for each rank, including $F_{-1}$ and $F_{d}$. Two flags are said to be \textit{adjacent} if they differ in exactly one face; they are called $j$-\textit{adjacent} if this is a $j$-face. A polytope $\mathcal{P}$ is \textit{strongly flag-connected}, in the sense that for any two flags $\Phi$ and $\Psi$ there exists a finite sequence of successively adjacent flags, each containing $\Phi \cap \Psi$. Finally, $\mathcal{P}$ has the \textit{diamond property\/}:\ whenever $F \leq G$, with $F$ a $(j-1)$-face and $G$ a $(j+1)$-face for some~$j$, there are exactly two $j$-faces $H$ with $F \leq H \leq G$.  Abstract 3-polytopes are also called (\emph{abstract}) \emph{polyhedra} and can be realized topologically as maps on closed surfaces.

If $F$ and $G$ are faces with $F \leq G$, we call
\[G/F := \{ H \in \mathcal{P}\, | \, F \leq H \leq G \}\]
a \textit{section} of $\mathcal{P}$ and note that this is a $(k-j-1)$-polytope in its own right if $F$ has rank $j$ and $G$ has rank $k$. A face $F$ naturally gives rise to two sections, namely, $F/F_{-1}$ usually identified with $F$, and $F_{d}/F$ called the \textit{co-face\/} of $\mathcal{P}$ at $F$, or \emph{vertex-figure\/} of $\mathcal{P}$ at $F$ if $F$ is a vertex.

A polytope $\mathcal{P}$ is called \emph{regular\/} if its automorphism group is transitive on the flags. Every regular polytope has regular facets and vertex-figures, and more generally, sections.  A regular $d$-polytope $\mathcal{P}$ is said to be of (Schl\"afli) type $\{p_1,\ldots,p_{d-1}\}$ if, for each $i=1,\ldots,d-1$, each section lying between an $(i-2)$-face and an incident $(i+1)$-face is isomorphic to a $p_i$-gon (possibly $p_{i}=\infty$).

The automorphism group of a regular $d$-polytope $\mathcal{P}$ is generated by involutions $\rho_0,\ldots,\rho_{d-1}$, where each $\rho_i$ maps a (fixed) \textit{base\/} flag $\Phi$ to its unique $i$-adjacent flag. These generators satisfy (at least) the standard Coxeter relations for string diagrams,
\begin{equation}
\label{Coxdia}
\centering
\begin{picture}(150,15)
\put(100,0){
\multiput(60,0)(45,0){2}{\circle*{4}}
\put(115,0){$,$}
\multiput(-130,0)(45,0){3}{\circle*{4}}
\put(40,0){\line(1,0){20}}
\put(-85,0){\line(1,0){20}}
\put(-130,0){\line(1,0){45}}
\put(-85,0){\line(1,0){45}}
\put(-40,0){\line(1,0){20}}
\put(60,0){\line(1,0){45}}
\put(-40,0){\line(1,0){20}}
\put(49,9){\scriptsize $\rho_{d-2}$}
\put(93,9){\scriptsize $\rho_{d-1}$}
\put(78,-8){\scriptsize $p_{d-1}$}
\put(-134,9){\scriptsize $\rho_{0}$}
\put(-89,9){\scriptsize $\rho_{1}$}
\put(-44,9){\scriptsize $\rho_{2}$}
\put(-110,-8){\scriptsize $p_1$}
\put(-65,-8){\scriptsize $p_{2}$}
\put(-4,-0.5){$\ldots\ldots$}}
\end{picture}
\end{equation}
\vskip.1in
\noindent
where the numbers $p_j$ are the entries of the Schl\"afli symbol. In addition, the automorphism group of $\mathcal{P}$ satisfies the following {\em intersection condition\/},
\begin{equation}
\label{intprop}
\langle \rho_i \mid i \in K \rangle \cap \langle \rho_i \mid i \in J \rangle
= \langle \rho_i \mid i \in {K \cap J} \rangle
  \;\; \textrm{ for } K,J \subseteq \{0,1,\ldots,d-1\}.
\end{equation}

Next we recall the concepts of a C-group and string C-group. Then it will be clear that the automorphism group of every regular $d$-polytope is a string C-group of rank $d$.

A group $G$ generated by involutions $\rho_{0},\ldots,\rho_{d-1}$ is called a \textit{C-group} if $G$ has the intersection property~(\ref{intprop}) with respect to these generators. In this case, the number of generators, $d$, is called the \textit{rank} of $G$. We sometimes use the notation $(G,\{\rho_{0},\ldots,\rho_{d-1}\})$ to emphasize the generators of $G$. If $p_{ij}$ denotes the order of the product $\rho_i\rho_j$ in a C-group $G$, then $G$ satisfies (at least) the standard Coxeter relations
\begin{equation}
\label{stringdia}
(\rho_i \rho_j)^{p_{ij}} = 1\quad (i,j=0, \ldots,d-1),
\end{equation}
with $p_{ii}=1$ and $p_{ji} = p_{ij}\geq 2$ for $i\neq j$. In other words, C-groups are quotients of Coxeter groups (with any diagram) that obey the intersection property.

A {\it string C-group\/} is a C-group $G$ whose underlying Coxeter diagram is a string; that is, in addition to being a C-group, the orders of the products must satisfy $p_{ij}=2$ for $|i-j|\geq 2$ (that is, non-adjacent generators must commute).

Every string $C$-group $G$ is the automorphism group of a regular $d$-polytope $\mathcal{P}$ such that the generators $\rho_{0},\ldots,\rho_{d-1}$ of $G$ become the distinguished generators of the automorphism group of $\mathcal{P}$ (\cite[Ch. 2E]{McMSch2002}). The type $\{p_1,\ldots,p_{d-1}\}$ is determined by the branch labels in the underlying string Coxeter diagram. Thus, the string C-groups are precisely the automorphism groups of regular polytopes.

For example, every Coxeter group with a string Coxeter diagram with $d$ nodes and branch labes $p_{1},p_{2},\ldots,p_{d-1}$ is a string C-group of rank $d$, namely the automorphism group of the \textit{universal} regular $d$-polytope $\{p_{1},\ldots,p_{d-1}\}$ (see \cite[Ch. 3D]{McMSch2002}).

A $d$-polytope $\mathcal{P}$ is said to be {\em $(k,l)$-flat\/}, where $0 \leq k < l \leq d-1$, if each of its $k$-faces is incident with each of its $l$-faces. In a $(0,d-1)$-flat polytope,
any vertex is incident with any facet. We simply refer to a $(0,d-1)$-flat polytope as a \textit{flat} polytope.

\section{The smallest regular polytopes}
\label{secsmallest}

In this section, we briefly summarize the results of Conder~\cite{Con2013} on the smallest regular polytopes of a given rank.

For a regular $d$-polytope $\mathcal{P}$ of a given Schl\"afli type $\{p_1,\ldots,p_{d-1}\}$, there is a lower bound for the order of the automorphism group $\Gamma(\mathcal{P})$ given by
\begin{equation}
\label{lowbound}
|\Gamma(\mathcal{P})|\,\geq\, 2p_1p_2\cdot\ldots\cdot p_{d-1}.
\end{equation}
As the order of the automorphism group of a regular polytope coincides with the number of its flags, the term on the right hand side is also a lower bound for the total number of flags in $\mathcal{P}$. Regular polytopes which attain this lower bound are called {\it tight}. Tight regular polytopes have the smallest number of automorphisms (or flags) among all regular polytopes of the given type. In particular, they have $p_1$ vertices, $p_{d-1}$ facets, and $p_{j}p_{j+1}/2$ faces of rank $j$ for $j=1,\ldots, d-2$.

There also exists an {\sl absolute} lower bound for the total number of automorphisms (or flags) of a regular $d$-polytope, irrespective of the Schl\"afli type. This bound only depends on the rank $d$. With a few small exceptions (described in \cite{Con2013}), the smallest regular $d$-polytopes come from a family of tight regular $d$-polytopes $\mathcal{C}_d$ of type $\{4,4,\ldots,4\}$, one for each rank~$d$, and with an automorphism group of order $2\cdot 4^{d-1}=2^{2d-1}$ (or, equivalently, with a total number of flags given by $2\cdot 4^{d-1}=2^{2d-1}$).

The generalized Schl\"afli type of the regular $d$-polytope $\mathcal{C}_d$ is given by
\[\{\{4,4\}_{(2,0)},\{4,4\}_{(2,0)},\ldots,\{4,4\}_{(2,0)}\}.\]
The generalized Schl\"afli type $\{\mathcal{P}_3,\ldots,\mathcal{P}_{d}\}$ of a regular $d$-polytope $\mathcal{P}$ records the structure of its sections of rank 3; that is, if $\{F_{-1},F_0,\ldots,F_d$ is any flag of $\mathcal{P}$, then the section $F_{j}/F_{j-4}$ of $\mathcal{P}$ is isomorphic to the regular polyhedron $\mathcal{P}_j$ for each $j=3,\ldots,d$.
For $\mathcal{C}_d$, all sections of rank 3 happen to be toroidal polyhedra $\{4,4\}_{(2,0)}$, obtained from a $2\times 2$ chessboard by identifying opposites sides~\cite{CoxMos1980}.
The polytope $\mathcal{C}_d$ is very small and has only $4$ vertices, $4$ facets, and 8 faces of each rank $j$ with $j=1,\ldots, d-2$.

The automorphism group $G(\mathcal{C}_d)=\langle\rho_0,\ldots,\rho_{d-1}\rangle$ of $\mathcal{C}_d$ is the quotient of the Coxeter group $[4,4,\ldots,4]=\langle\rho_0,\ldots,\rho_{d-1}\rangle$ (we deliberately keep the same names for the generators, slightly abusing notation), with diagram
\begin{equation}
\label{Coxdia}
\centering
\begin{picture}(150,15)
\put(100,0){
\multiput(60,0)(45,0){2}{\circle*{4}}
\put(115,0){$,$}
\multiput(-130,0)(45,0){3}{\circle*{4}}
\put(40,0){\line(1,0){20}}
\put(-85,0){\line(1,0){20}}
\put(-130,0){\line(1,0){45}}
\put(-85,0){\line(1,0){45}}
\put(-40,0){\line(1,0){20}}
\put(60,0){\line(1,0){45}}
\put(-40,0){\line(1,0){20}}
\put(49,9){\scriptsize $\rho_{d-2}$}
\put(93,9){\scriptsize $\rho_{d-1}$}
\put(78,-8){\scriptsize $4$}
\put(-134,9){\scriptsize $\rho_{0}$}
\put(-89,9){\scriptsize $\rho_{1}$}
\put(-44,9){\scriptsize $\rho_{2}$}
\put(-110,-8){\scriptsize $4$}
\put(-65,-8){\scriptsize $4$}
\put(-4,-0.5){$\ldots\ldots$}}
\end{picture}
\end{equation}
\vskip.1in
\noindent
defined by the extra relations
\begin{equation}
\label{exrelscd}
[(\rho_{i}\rho_{i+1})^2, \rho_j] \,=\, 1 \quad (0\leq i<d-1;\,0\leq j\leq d-1),
\end{equation}
which force the elements $(\rho_i\rho_{i+1})^2$ to lie in the center $Z(G(\mathcal{C}_d))$ of $G(\mathcal{C}_d)$.\smallskip

Define the subgroup $K$ of $Z(G(\mathcal{C}_d))$ by
\[ K:= \langle (\rho_0\rho_1)^2, (\rho_1\rho_2)^2, \ldots, (\rho_{d-3}\rho_{d-2})^2, (\rho_{d-2}\rho_{d-1})^2\rangle, \]
and note that $K$ is abelian and normal in $G(\mathcal{C}_d)$. Then the quotient $G(\mathcal{C}_d)/K$ must also be abelian, since any two of its induced generators $\rho_{i}K$ commute by the definition of $K$. On the other hand, $G(\mathcal{C}_d)$ itself is not abelian. As a 2-group, $G(\mathcal{C}_d)$ is nilpotent. In particular, the upper central series
\[ \{1\} \,\leq\, K\, \leq\, G(\mathcal{C}_d) \]
for $G(\mathcal{C}_d)$ shows that $G(\mathcal{C}_d)$ is of nilpotency class~$2$.

\section{String C-group of order $2^n$ as extensions}
\label{cdminimality}

In this section we exploit Frattini subgroups and agemo subgroups of finite $p$-groups to establish the remarkable fact that among the finite regular $d$-polytopes whose automorphism group are 2-groups there exists a unique minimal element with respect to combinatorial covering, namely the regular $d$-polytope $\mathcal{C}_d$ of \cite{Con2013} described in the previous section.

Recall that the {\em Frattini subgroup} of a finite group $G$, denoted $\Phi(G)$, is defined as the intersection of all maximal subgroups of $G$. Clearly, $\Phi(G)$ is a proper, characteristic subgroup of $G$. Note that $\Phi(G)$ consists precisely of the elements $x$ of $G$ that are redundant in any generating set for $G$; that is, if $X$ is a generating set of $G$ containing $x$, then $X\setminus \{x\}$ is also a generating set of $G$.

For a finite $p$-group $G$ and an integer $j\geq 0$, the {\it $j$-th agemo subgroup\/} $\mho_j(G)$ is defined as
\[\mho_j(G)=\langle x^{p^j} \mid x\in G\rangle.\]
For example, if $p=2$ then $\mho_1(G)=\langle x^{2} \mid x\in G\rangle$. (The name ``agemo" is ``omega", read in reverse; and the symbol $\mho$ is an upside-down~$\Omega$.)  The agemo subgroups are related to the {\it omega subgroups} of a finite $p$-group $G$, which are defined by
$\Omega_j(G)=\langle x \in G \mid x^{p^j}=1\rangle$.
The agemo and omega subgroups are characteristic subgroups of $G$.
\smallskip

The following theorem is the well-known Burnside Basis Theorem for $p$-Groups (see \cite[Thm.\,1.12]{Ber2008}).

\begin{theorem}
\label{burnside}
Let $G$ be a finite $p$-group, and let $|G: \Phi(G)| = p^d$ with $d\geq 1$.\\[.04in]
(a)\, Then $G/\Phi(G) \cong \mathbb{Z}_p^d$, which is an elementary abelian $p$-group of rank $d$. Moreover, if $N \lhd G$ and $G/N$ is elementary abelian, then $\Phi(G) \leq N$. Thus $G/\Phi(G)$ is the largest elementary abelian quotient of $G$. \\[.02in]
(b)\, Every minimal generating set of $G$ contains exactly $d$ elements.\\[.02in]
(c)\, The commutator subgroup $G'$ and first agemo subgroups $\mho_1(G)$ of $G$ are subgroups of the Frattini group $\Phi(G)$ of~$G$, and the three subgroups are related by
\[\Phi(G)=G' \,\mho_1(G).\]
(d)\, If $p=2$ then $\Phi(G)=\mho_{1}(G)$.
\end{theorem}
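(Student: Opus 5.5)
The Burnside Basis Theorem is standard, so the plan is the textbook argument. Its core is a basic fact about $p$-groups: every maximal subgroup $M$ of a finite $p$-group $G$ is normal and has index $p$. To see this, I would use nilpotency. Since $M<N_G(M)$ for a proper subgroup of a nilpotent group, maximality forces $N_G(M)=G$. Then $G/M$ is a $p$-group with no proper nontrivial subgroups, so $G/M\cong\mathbb{Z}_p$.

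For part (c), I would argue as follows.
\begin{itemize}
\item Each quotient $G/M$ is abelian of exponent $p$. Hence $G'\le M$ and $x^p\in M$ for every $x\in G$.
\item Intersecting over all maximal $M$ gives $G'\,\mho_1(G)\le\Phi(G)$.
\item Conversely, $N:=G'\mho_1(G)$ is normal, and $G/N$ is elementary abelian, i.e.\ a vector space over $\mathbb{F}_p$.
\item The trivial subspace is the intersection of the hyperplanes of this vector space. Each hyperplane pulls back to a maximal subgroup of $G$, so $\Phi(G)\le N$.
\end{itemize}
This proves $\Phi(G)=G'\mho_1(G)$.

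Part (a) follows from the same reasoning. $G/\Phi(G)$ is elementary abelian of order $p^d$, hence isomorphic to $\mathbb{Z}_p^d$. If $G/N$ is elementary abelian, then $G'\le N$ and $x^p\in N$ for all $x$. Therefore $\Phi(G)=G'\mho_1(G)\le N$, so $G/\Phi(G)$ is the largest elementary abelian quotient.

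For part (b), the first step is to show that $X$ generates $G$ if and only if its image generates $G/\Phi(G)$.
\begin{itemize}
\item Suppose the image of $X$ generates $G/\Phi(G)$ but $\langle X\rangle\ne G$.
\item Then $\langle X\rangle$ lies in some maximal subgroup $M$.
\item Since $\Phi(G)\le M$, we get $\langle X\rangle\Phi(G)\le M<G$, a contradiction.
\end{itemize}
The converse direction is trivial. Equivalently, this is the stated redundancy property of elements of $\Phi(G)$.

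Now let $X$ be a minimal generating set of $G$. Its image spans $\mathbb{F}_p^d$. The images are pairwise distinct and form a linearly independent set. Otherwise some element's image lies in the span of the others, and then by the equivalence above that element could be dropped. An independent spanning set is a basis, so $|X|=d$.

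For part (d), take $p=2$. For all $x,y\in G$ we have
\[
[x,y]=x^{-1}y^{-1}xy=x^{-2}(xy^{-1})^2y^2\in\mho_1(G),
\]
using $x^{-1}=x\cdot x^{-2}$ and similar rewritings. A direct check is $x^{-2}(xy^{-1})^2y^2=x^{-1}y^{-1}xy$, which holds. Hence $G'\le\mho_1(G)$, and (c) gives $\Phi(G)=\mho_1(G)$. Alternatively: $G/\mho_1(G)$ has exponent $2$, so it is abelian, and hence $G'\le\mho_1(G)$.

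The only step needing genuine care is the normality and index $p$ of maximal subgroups. I would handle it through the normalizer-growth property of nilpotent groups, or alternatively by induction on $|G|$ using a central subgroup of order $p$. Since the paper cites \cite[Thm.\,1.12]{Ber2008}, it suffices to invoke that reference.
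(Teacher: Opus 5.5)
Your proof is correct and complete. The paper gives no proof of its own and simply cites Berkovich's book (Thm.~1.12); your argument is the standard textbook proof behind that citation: maximal subgroups of a $p$-group are normal of index $p$, so $G'\mho_1(G)\le\Phi(G)$, with equality via the hyperplanes of $G/G'\mho_1(G)$; the non-generator property of $\Phi(G)$ gives (b); and the identity $[x,y]=x^{-2}(xy^{-1})^2y^2$ gives (d). Your version of (b) also covers inclusion-minimal generating sets, which is the form the paper needs when it applies (b) to the distinguished generators of a string C-group.
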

\medskip

Next we study the implications of the Burnside Basis Theorem for string C-groups. To this end, let $(G, \{\rho_0, \rho_1, \cdots, \rho_{d-1}\})$ be a finite string C-group of 2-power order and Schl\"afli type $\{2^{s_1},2^{s_2},\ldots,2^{s_{d-1}}\}$, where $s_1,s_2,\ldots,s_{d-1} \geq 2$. Then $\rho_0, \rho_1, \cdots, \rho_{d-1}$ form a minimal generating set for $G$; in fact, this is true for any string C-group. Let
\begin{equation}
\label{agroup}
A:=G^{+}=\langle \rho_0\rho_1, \ldots, \rho_{d-2}\rho_{d-1}\rangle
\end{equation}
denote the rotation subgroup of $G$, which is of index 1 or 2 in $G$. Then $A$ also has 2-power order and the generators $\rho_0\rho_1, \ldots, \rho_{d-2}\rho_{d-1}$ form a minimal generating set of $A$. In fact, if any generator in this generating set was redundant, then $A$ could be generated by fewer than $d-1$ elements and thus $G=\langle\rho_0,A\rangle$ could be generated by fewer than $d$ elements. This is impossible, since all minimal generating sets for $G$ consist of $d$ elements because $G$ is a 2-group. As a consequence, since $A$ is a 2-group, any minimal generating set of $A$ will consist of $d-1$ elements.

Further, for $i=1,\ldots,d-1$, let $A_i$ denote the subgroup of $A$ given by
\begin{equation}
\label{aigroups}
A_i = \langle \rho_0\rho_1, \ldots, \rho_{i-2}\rho_{i-1},(\rho_{i-1}\rho_{i})^2, \rho_{i}\rho_{i+1},\ldots, \rho_{d-2}\rho_{d-1}\rangle.
\end{equation}
We will see later, these generators form a minimal generating set for $A_i$. Finally, define the two subgroups $B$ and $C$ of $A$ by
\begin{equation}
\label{bcgroup}
B:=\bigcap_{i=1}^{d-1} A_i,\quad
C:=\bigcap_{i=1}^{d-1} \Phi(A_i) .
\end{equation}
\smallskip

These subgroups of the rotation group can be exploited to establish the following general theorem about string C-groups of 2-power order. The fourth part of the theorem also establishes our main Theorem~\ref{mainthm1}.
\smallskip

\begin{theorem}
\label{stringC}
Let $d,n\geq 2$ and $G$ be a string C-group of order $2^n$ and Schl\"afli type $\{2^{s_1},2^{s_2},\ldots,2^{s_{d-1}}\}$, where $s_1,s_2,\ldots,s_{d-1} \geq 2$. Then,\\[.03in]
(a)\, $|\Phi(G)|= 2^{n-d}$ and $\Phi(G)=G'=\mho_1(G)$.\\[.02in]
(b)\, $G'=B$, with $B$ as in \eqref{bcgroup}.\\[.02in]
(c)\, $|C|=2^{n-2d+1}$, with $C$ as in \eqref{bcgroup}.\\[.02in]
(d)\, $G/C$ is a string C-group of order $2^{2d-1}$ and $G/C\cong G(\mathcal{C}_d)$, where $\mathcal{C}_d$ is the regular $d$-polytope of type $\{4,4,\ldots,4\}$ described in Section~\ref{secsmallest}.
\\[.02in]
(e)\, $C=G_3$, where $G_3:=[G,G,G]:=[[G,G],G]$.
\end{theorem}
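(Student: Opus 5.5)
The plan is to work throughout in the elementary abelian quotients given by Theorem~\ref{burnside}. I write $x_j:=\rho_{j-1}\rho_j$ for $j=1,\ldots,d-1$, so that $A=\langle x_1,\ldots,x_{d-1}\rangle$.

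\emph{Part (a).} Since $\rho_0,\ldots,\rho_{d-1}$ is a minimal generating set, Theorem~\ref{burnside}(b) gives $|G:\Phi(G)|=2^d$, so $|\Phi(G)|=2^{n-d}$. The quotient $G/G'$ is abelian and generated by $d$ involutions, so it is elementary abelian. Theorem~\ref{burnside}(a) then gives $\Phi(G)\le G'$, and (c) gives $G'\le\Phi(G)$. Finally, (d) gives $\Phi(G)=\mho_1(G)$.

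\emph{Part (b).} For $1\le i\le d-1$, I will define $\varepsilon_i\colon G\to\mathbb{Z}_2$ by sending $\rho_j\mapsto 1$ for $j<i$ and $\rho_j\mapsto 0$ for $j\ge i$. This is well defined because every defining relation $(\rho_j\rho_k)^{p_{jk}}$ has even exponent: here $p_{jk}=2$ or $2^{s}$ with $s\ge 2$. The kernel $M_i$ of $\varepsilon_i|_A$ has index 2 in $A$ and contains $G'$.

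Next I show $A_i=M_i$. The generators of $A_i$ lie in $M_i$, and $A=\langle A_i,x_i\rangle$. So it suffices that $x_i$ normalizes $A_i$. Conjugation by $x_i$ fixes $x_j$ for $|j-i|\ge 2$. For the neighbours, a short computation gives
\[
x_ix_{i+1}x_i^{-1}=x_{i+1}^{-1}x_i^{-2},
\]
and similarly for $x_{i-1}$. Both conjugates lie in $A_i$.

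Since the images of $x_1,\ldots,x_{d-1}$ form a basis of $A/G'\cong\mathbb{Z}_2^{d-1}$ (orders: $|A|=2^{n-1}$, $|G'|=2^{n-d}$), the subgroups $M_i$ are the preimages of the coordinate hyperplanes. Hence $B=\bigcap_i M_i=G'$. In addition, each $A_i$ is normal in $G$, being the kernel of $\varepsilon_i$ restricted to $A$. Therefore $\Phi(A_i)$, which is characteristic in $A_i$, is normal in $G$, and so is $C$.

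\emph{Part (c).} The key claim is that $A_i$ has rank exactly $d-1$, i.e.\ $x_i^2\notin\Phi(A_i)$. Granting this, $|\Phi(A_i)|=2^{n-2-(d-1)}=2^{n-d-1}$, so $G'/\Phi(A_i)$ has order 2. Being a normal subgroup of order 2, it is central in $G/\Phi(A_i)$, which gives $[G',G]\le\Phi(A_i)$ for every $i$, hence $G_3\le C$.

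Moreover, $G'/C$ embeds in $\prod_i G'/\Phi(A_i)\cong\mathbb{Z}_2^{d-1}$. This embedding is onto: $x_i^2$ is nontrivial in $G'/\Phi(A_i)$ by the key claim, while for $j\ne i$ we have $x_i\in A_j$, so $x_i^2\in\mho_1(A_j)=\Phi(A_j)$. Thus $|G'/C|=2^{d-1}$, so $|G:C|=2^{2d-1}$ and $|C|=2^{n-2d+1}$.

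\emph{Parts (d) and (e).} In $G/C$ each $x_i^2$ is central, because $[x_i^2,\rho_k]\in G_3\le C$. Also $x_i^4=(x_i^2)^2\in\Phi(A_j)$ for all $j$. Hence $G/C$ satisfies the defining relations \eqref{exrelscd} of $G(\mathcal{C}_d)$ and is a quotient of it. Since both groups have order $2^{2d-1}$, the quotient map is an isomorphism mapping distinguished generators to distinguished generators; in particular $G/C$ is a string C-group.

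For (e), the group $G/G_3$ has class at most 2. There $x_i^2=[\rho_{i-1},\rho_i]$ is central, and $x_i^4=[\rho_{i-1},\rho_i]^2=[\rho_{i-1},\rho_i^2]=1$. So $G/G_3$ is also a quotient of $G(\mathcal{C}_d)$, giving $|G:G_3|\le 2^{2d-1}=|G:C|$. Together with $G_3\le C$, this yields $C=G_3$.

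\emph{Main obstacle.} The hard step is the key claim $x_i^2\notin\Phi(A_i)$, which says that the listed generators of $A_i$ are minimal, and the whole count in (c) depends on it. My first attempt will be to find a quotient of $A_i$, built from the intersection property \eqref{intprop}, in which $x_i^2$ survives modulo squares and commutators. Concretely, I would compare $A_i$ with the rotation subgroups of the parabolic subgroups $\langle\rho_0,\ldots,\rho_{i-1}\rangle$ and $\langle\rho_{i-1},\ldots,\rho_{d-1}\rangle$. Their intersection is $\langle\rho_{i-1}\rangle$, which should separate $x_i^2$ from products of $(d-2)$-generated pieces. Failing that, I would try to reduce to the dihedral subgroup $\langle\rho_{i-1},\rho_i\rangle$ of order $2^{s_i+1}\ge 8$, where $x_i^2$ has order at least $2$ and is not a square of any element of $A_i$ lying in that subgroup.
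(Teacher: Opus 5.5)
Your reductions are sound. Parts (a) and (b) are correct, and (c)–(e) follow correctly once the key claim is available. Describing $A_i$ as $\ker(\varepsilon_i|_A)$, and $B/G'$ as an intersection of coordinate hyperplanes of $A/G'\cong\mathbb{Z}_2^{d-1}$, is a neat alternative to the paper's successive order counts. One small repair: $\varepsilon_i$ is well defined not because the Coxeter relations have even exponents ($G$ has further relations). It is well defined because, by (a), $G/G'\cong\mathbb{Z}_2^d$ with basis $\rho_0G',\ldots,\rho_{d-1}G'$.

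The genuine gap is that the key claim $x_i^2\notin\Phi(A_i)$ is not proved, and it is the entire content of the theorem, not a side lemma. Since $|G':\Phi(A_i)|\le 2$ always, $G_3\le\Phi(A_i)$ holds unconditionally. Modulo $G_3$, the group $\Phi(A_i)=\mho_1(A_i)$ is spanned by the $x_k^2$ with $k\neq i$. Hence the key claim for all $i$ is equivalent to $|G:G_3|=2^{2d-1}$, i.e.\ to Theorem~\ref{mainthm1} itself.

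It also cannot follow from the facts you have used so far (a minimal generating set of involutions, type entries $\ge 4$). In $D_4\times C_2=\langle a,b\rangle\times\langle t\rangle$ put $\rho_0=a$, $\rho_1=b$, $\rho_2=at$. These form a minimal generating set of involutions of type $\{4,4\}$, yet $(\rho_0\rho_1)^2=(\rho_1\rho_2)^2$. So $A_1=\langle\rho_1\rho_2\rangle$ is cyclic and $x_1^2\in\Phi(A_1)$. The intersection property must therefore be used essentially. Your dihedral fallback cannot work, because $\mho_1(A_i)$ is generated by squares of arbitrary elements of $A_i$; in the example, $x_1^2$ is the square of $\rho_1\rho_2\notin\langle\rho_0,\rho_1\rangle$.

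Do not expect to find the missing step in the paper. Its minimality argument ($\widehat A_i=\langle\rho_{i-1},\rho_i,A_i\rangle=G$) only shows that the generators $\rho_{j-1}\rho_j$, $j\neq i$, are irredundant. Irredundance of $(\rho_{i-1}\rho_i)^2$, which given the rest is exactly your key claim, is dismissed in one sentence that never invokes the intersection property.

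For $i=1$ you can close the gap. Let $K=\ker\varepsilon_1=\langle\rho_1,\rho_0\rho_1\rho_0,\rho_2,\ldots,\rho_{d-1}\rangle$; none of these $d$ involutions is redundant:
\begin{itemize}
\item For $\rho_j$ with $j\ge 2$: all the other generators lie in $\langle\rho_l: l\neq j\rangle$, which meets $\langle\rho_j\rangle$ trivially.
\item For $\rho_1$: if it were redundant, conjugating by $\rho_0$ gives $\rho_0\rho_1\rho_0\in\langle\rho_0,\rho_1\rangle\cap\langle\rho_1,\ldots,\rho_{d-1}\rangle=\langle\rho_1\rangle$, forcing $p_1=2$.
\item For $\rho_0\rho_1\rho_0$: the same argument applies symmetrically.
\end{itemize}
So $K$ has rank $d$, and its index-$2$ subgroup $A_1=K\cap A$ has rank at least $d-1$. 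The case $i=d-1$ is symmetric, which settles $d\le 3$.

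For $2\le i\le d-2$, however, this device gives nothing. All three maximal subgroups of $G$ containing $A_i$ can have rank only $d-1$, already in $G(\mathcal{C}_4)$ with $i=2$. The middle indices need an argument that neither your proposal nor the paper supplies.
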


\begin{proof}
The first part of the theorem can be settled using three applications of Theorem~\ref{burnside}. Suppose $G$ has distinguished involutory generators $\rho_0,\ldots,\rho_{d-1}$. The generators of a string C-group are known to be a minimal generating set, so Theorem~\ref{burnside}(b) directly implies that $|G: \Phi(G)| = p^d$ and thus $|\Phi(G)|=2^{n-d}$. Moreover, working modulo the commutator subgroup $G'$ and exploiting that $G$ is generated by $d$ involutions, shows immediately that $G/G'$ is an elementary abelian 2-group and thus $\Phi(G)\leq G'$, now by Theorem~\ref{burnside}(a). As $G$ is a $p$-group with $p=2$, Theorem~\ref{burnside}(c,d) now yields $\Phi(G)=G'=\mho_1(G)$. This settles the first part of the theorem.

We proceed with the second part of the theorem. First note that the rotation subgroup $A$ of $G$ must necessarily have index 2 in $G$. In fact, otherwise, $A=G$, which is impossible since then $G$ could be generated by fewer than $d$ elements, namely the $d-1$ generators $\rho_i\rho_{i+1}$ of $A$. It follows that $|A|=2^{n-1}$.

Next consider the subgroups $A_i$ of $A$ defined in \eqref{aigroups}. We claim that
$A_i \unlhd G$ and $|A_i|=2^{n-2}$ for each $i$. It is straightforward that $A_i \unlhd A$; in fact,  conjugating a generator of $A_i$ by a generator $\rho_{j-1}\rho_{j}$ of $A$ either leaves the generator of $A_i$ unchanged or explicitly gives a product of three generators of $A_i$. Similarly, $A_i$ is normalized by the generator $\rho_0$ of $G$; in fact, for $i\geq 4$ this is clear from the commutation rules in string $C$-groups, but for $i=1,3$ and $i=2$ observe additionally that
$\rho_0(\rho_1\rho_2)\rho_{0}= (\rho_0\rho_1)^{2}\rho_1\rho_2$ or
$\rho_0(\rho_1\rho_2)^{2}\rho_{0}= (\rho_0\rho_1)(\rho_1\rho_2)^{-2}(\rho_1\rho_0)$, respectively. Thus, $A_i \unlhd G$ since $G=\langle \rho_0, A\rangle$.

Moreover, $|A:A_i| \leq 2$. In fact, all generators of $A$, save $\rho_{i-1}\rho_i$, lie in $A_i$,  and so $A/A_i=\langle\rho_{i-1}\rho_{i}A_i\rangle$. Since $(\rho_{i-1}\rho_i)^2$ also lies in $A_i$, it follows that $|A/A_i|\leq 2$.

On the other hand, $A_i$ must be a proper subgroup of $A$. In fact, $(\rho_{i-1}\rho_{i})^2 \in \mho_1(A)$ and $\mho_1(A)=\Phi(A)$ by Theorem~\ref{burnside}(d). It follows that the element $(\rho_{i-1}\rho_{i})^2$ of $\Phi(A)$ is redundant in any generating set of $A$ that includes it. Now, if $A_{i}=A$, then the generators for $A_i$ in \eqref{aigroups} are also generators of $A$ and thus $(\rho_{i-1}\rho_{i})^2$ could be eliminated from this list of generators of $A$, leaving only the $d-2$ generators of the form $\rho_{j-1}\rho_{j}$, $j\neq i$, for $A$. This is impossible since all minimal generating sets of $A$ consist of $d-1$ elements. Thus, $|A:A_i| = 2$ and $|A_i|=2^{n-2}$.

We also require that the generators $\rho_0\rho_1, \ldots, \rho_{i-2}\rho_{i-1},(\rho_{i-1}\rho_{i})^2, \rho_{i}\rho_{i+1},\ldots, \rho_{d-2}\rho_{d-1}$ of $A_i$ in \eqref{aigroups} form a minimal generating set of $A_i$. Clearly, the generator $(\rho_{i-1}\rho_{i})^2$ cannot be redundant, since otherwise $A_i$ would lie in $A$, which is not true as we just saw. The argument is more complicated for the generators $\rho_{j-1}\rho_{j}$ of $A_i$ with $j\neq i$. Suppose that $\rho_{j-1}\rho_{j}$ can be eliminated from the generating set for $A_i$ for some $j\neq i$. We claim that then the subgroup
$\widehat{A}_{i}:=\langle \rho_{i-1},\rho_i,A_i\rangle$ of $G$ must coincide with $G$. First note that $A_{i}\unlhd \widehat{A}_{i}$ since $A_{i}\unlhd G$, and that
\[ \widehat{A}_{i}/A_i = \langle\rho_{i-1}A_i,\rho_{i}A_i \rangle.\]
Here, the two generators $\rho_{i-1}A_i,\rho_{i}A_i$ are nontrivial since $A_i$ lies in $A$ but
$\rho_{i-1}$ and $\rho_{i}$ do not. Moreover, their product is also nontrivial; otherwise,
$\rho_{i-1}\rho_{i}$ would lie in $A_i$ and thus $A_i$ would coincide with $A$. It follows that
$\widehat{A}_{i}/A_i\cong C_2\times C_2$ and importantly, that $\widehat{A}_{i}/A_i = G$ since $|A_i|=2^{n-2}$. Now we can complete the argument as follows. By the definitions of $\widehat{A}_{i}$ and the redundancy assumption of $\rho_{j-1}\rho_{j}$ for the generating set of $A_i$, the $d$ elements
\begin{equation}
\label{redundant}
\rho_{i-1},\rho_i, (\rho_{i-1}\rho_{i})^2,\, \rho_{k-1}\rho_{k}\;(k\neq i,j)
\end{equation}
form a generating set for the entire group $G$. But $(\rho_{i-1}\rho_{i})^2$ lies in $\mho_{1}(G)=\Phi(G)$, and so $(\rho_{i-1}\rho_{i})^2$ is a redundant element in any generating set of $G$ that contains it. Thus $(\rho_{i-1}\rho_{i})^2$ can be eliminated from the generating set in \eqref{redundant}, leaving a generating set of $G$ with fewer than $d$ elements. This is impossible.

Summarizing our considerations about the subgroups $A_i$ so far, we have established that $A_i \unlhd G$, $|A:A_{i}|=2$, $|A_i|=2^{n-2}$, and that the generating set of \eqref{aigroups} for $A_i$ is minimal.

Next we investigate a sequence of subgroups $B_1,\ldots,B_{d-2}$ of $A$ related to the subgroups $A_1,\ldots,A_{d-1}$. Define $B_1:=A_1 \cap A_2$ and $B_{i}:=B_{i-1}\cap A_{i+1}$ for $i=2,\ldots,d-2$. Then,
$B_{i}=A_1\cap \ldots \cap A_{i+1}$ for $i=1,\ldots,d-2$. In particular, $B_{d-2}=B$, with $B$ defined as in \eqref{bcgroup}. Our goal is to show that $|B_{i}|=2^{n-(i+2)}$ for $i=1,\ldots,d-2$.

Consider $B_1=A_1 \cap A_2$. First note that $A_1A_2=A$. In fact, $\rho_0\rho_1\in A_2$,
$\rho_1\rho_2\in A_1$, and all other generators $\rho_{i-1}\rho_{i}$ of $A$ with $i\geq 3$ lie in both $A_1$ and $A_2$. It follows that all generators of $A$ lie in $A_1A_2$. Thus $A_1A_2=A$. We now can compute the order of $B_1$ from
\[|B_1|=\frac{|A_1||A_2|}{|A_1A_2|}=\frac{2^{n-2} \cdot 2^{n-2}}{2^{n-1}}=2^{n-3}.\]

We can argue similarly for $B_2=B_1 \cap A_3$. In fact, since the generator $\rho_2\rho_3$ of $A$ missing from $A_3$ lies in $B_1$, we can conclude that $B_1A_3=A$. It follows that
\[|B_2|=\frac{|B_1||A_3|}{|B_1A_3|}=\frac{2^{n-3} \cdot 2^{n-2}}{2^{n-1}}=2^{n-4}.\]

Proceeding in this manner with the remaining subgroups $B_i$, we finally arrive at $B_{d-2}=B_{d-3} \cap A_{d-1}=B$. In this case the generator $\rho_{d-2}\rho_{d-1}$ of $A$ missing from $A_{d-1}$ lies in $B_{d-3}$, so that $B_{d-3}A_{d-1}=A$. This gives
\[|B|=|B_{d-2}|=\frac{|B_{d-3}||A_{d-1}|}{|B_{d-2}A_{d-1}|}=\frac{2^{n-(d-1)} \cdot 2^{n-2}}{2^{n-1}}=2^{n-d}.\]

We now can settle the second part of the theorem. First note that the elements $(\rho_0\rho_1)^2$, $(\rho_1\rho_2)^2$, $\cdots$, $(\rho_{d-2}\rho_{d-1})^2$ all lie in $B$, so $G/B$ is necessarily abelian since its generators $\rho_{0}B,\ldots,\rho_{d-1}B$ commute. It follows that $G' \leq B$. On the other hand, by the first part of the theorem, $G'=\Phi(G)$ and $|\Phi(G)|=2^{n-d}$. Comparing group orders we find that $|G'|=2^{n-d}=|B|$ and therefore $G'=B$.

For the third part of the theorem, we investigate the Frattini groups and agemo groups of the subgroups $A_i$ of $A$. First note that $\Phi(A_i) \unlhd G$ since $A_i$ is normal in $G$ and $\Phi(A_i)$ is a characteristic subgroup of $A_i$. By Theorem~\ref{burnside}(d), $\Phi(A)=\mho_1(A)$, $\Phi(A_i)=\mho_1(A_i)$ and therefore
\[ \langle (\rho_0\rho_1)^2,  (\rho_1\rho_2)^2,  (\rho_2\rho_3)^2,  \cdots, (\rho_{d-2}\rho_{d-1})^2\rangle \,\leq\, \mho_1(A)=\Phi(A)\]
and
\begin{equation}
\label{phiaigroups}
\langle (\rho_0\rho_1)^2, \ldots, (\rho_{i-2}\rho_{i-1})^2,(\rho_{i-1}\rho_{i})^4, (\rho_{i}\rho_{i+1})^2,\ldots, (\rho_{d-2}\rho_{d-1})^2\rangle
\,\leq\, \mho_1(A_i) = \Phi(A_i)
\end{equation}
for $i=1,\ldots,d-1$. Note that, by Theorem~\ref{burnside},
\[|\Phi(A)|=\frac{|A|}{2^{d-1}}=2^{n-d},\;\;\;
|\Phi(A_i)|=\frac{|A_i|}{2^{d-1}}=2^{n-d-1}\; (i=1,\ldots,d-1).\]
Recall here that the  minimal generating sets for the 2-groups $A$ and $A_i$ all consist of $d-1$ elements.

We now consider subgroups $C_1,\ldots,C_{d-2}$ of $\Phi(A)$ that are constructed from the Frattini groups $\Phi(A_1),\ldots,\Phi(A_{d-1})$ in the same way in which the subgroups $B_1,\ldots,B_{d-2}$ of $A$ were constructed from $A_1,\ldots,A_{d-1}$. In other words,
define $C_1:=\Phi(A_1) \cap \Phi(A_2)$ and $C_{i}:=C_{i-1}\cap \Phi(A_{i+1})$ for $i=2,\ldots,d-2$, so that $C_i:=\Phi(A_1) \cap \ldots\cap \Phi(A_{i+1})$ for $i=1,\ldots,d-2$. Then, $C_{d-2}=C$, with $C$ as in \eqref{bcgroup}. Observe that each $C_{i}$ is normal in the whole group $G$ since each subgroup $\Phi(A_j)$ participating in the intersection for $C_i$ is normal in $G$. We will show that $|C_i|=2^{n-d-(i+1)}$ for each $i=1,\ldots,d-2$, using a similar line of argument as above.

For the proof for $C_1=\Phi(A_1) \cap \Phi(A_2)$ we first verify that $\Phi(A_1)\Phi(A_2)=\Phi(A)$. In fact, from \eqref{phiaigroups} we know that $(\rho_0\rho_1)^{2}\in \Phi(A_2)$, $(\rho_1\rho_2)^{2}\in \Phi(A_1)$, and that all other generators $(\rho_{i-1}\rho_{i})^2$ of $\Phi(A)$ with $i\geq 3$ lie in both $\Phi(A_1)$ and $\Phi(A_2)$. Hence all generators of $\Phi(A)$ lie in $\Phi(A_1)\Phi(A_2)$ and therefore $\Phi(A_1)\Phi(A_2)=\Phi(A)$. Then,
\[|C_1|=\frac{|\Phi(A_1)||\Phi(A_2)|}{|\Phi(A)|}
=\frac{2^{n-d-1} \cdot 2^{n-d-1}}{2^{n-d}}=2^{n-d-2}.\]

For $C_2=C_1 \cap \Phi(A_3)$ we can argue similarly, noting that $C_{1}\Phi(A_3)=\Phi(A)$ because $(\rho_2\rho_3)^2 \in C_1$. Then,
\[|C_2|=\frac{|C_1||\Phi(A_3)|}{|\Phi(A)|}=\frac{2^{n-d-2} \cdot 2^{n-d-1}}{2^{n-d}}=2^{n-d-3}.\]

In much the same way we can proceed with the remaining subgroups $C_i$ until we arrive at
$C=C_{d-2}=C_{d-3} \cap \Phi(A_{d-1})$. Then $C_{d-3}\Phi(A_{d-1})=\Phi(A)$ since $(\rho_{d-2}\rho_{d-1})^2 \in C_{d-3}$. It follows that
\[|C|=|C_{d-2}|=\frac{|C_{d-3}||\Phi(A_{d-1})|}{|\Phi(A)|}=\frac{2^{n-d-(d-2)} \cdot 2^{n-d-1}}{2^{n-d}}=2^{n-d-(d-1)}=2^{n-2d+1}.\]
This establishes the third part of the theorem.

For the fourth part of the theorem, we need to show that the quotient $G/C$ is a string C-group of rank $d$. Since $G=\langle\rho_0,\ldots,\rho_{d-1}\rangle$ itself is a string C-group of rank $d$, the induced generators of $G/C$ are involutions (because $C$ lies in $A$) and satisfy the commutation relations consistent with a string diagram. It remains to establish the intersection property \eqref{intprop} for $G/C$. To this end, we establish the intersection property of $G/C$ indirectly, namely by explicitly identifying $G/C$ as the automorphism group of the regular $d$-polytope $\mathcal{C}_d$ of Section~\ref{phiaigroups}, which of course is a C-group.

We first show that
\begin{equation}
\label{comrhoC}
[(\rho_i\rho_{i+1})^2, \rho_{i+2}]\,\in\, C \quad (i=0,\ldots,d-3).
\end{equation}
Since $C:=\Phi(A_1) \cap \ldots\cap \Phi(A_{d-1})$, this is equivalent to establishing that the commutator $[(\rho_i\rho_{i+1})^2, \rho_{i+2}]$ lies in $\Phi(A_j)$ for each $j=1,\ldots,d-1$. We check this property by way of example. Then the remaining cases can be dealt with similarly.

Consider, for example, the commutator $[(\rho_0\rho_1)^2, \rho_2]$ obtained when $i=0$. We treat the cases $j=1$ and $j>1$ separately. When $j=1$ we want to show that $[(\rho_0\rho_1)^2, \rho_2] \in \Phi(A_1)$. First note that, since $\rho_0$ and $\rho_2$ commute, we have
\[ \begin{array}{rll}
[(\rho_0\rho_1)^2, \rho_2]
\,=\, [\rho_1\rho_0\rho_1, \rho_2]^{\rho_0}\!\!\!
&=\, [\rho_0, \rho_1\rho_2\rho_1]^{\rho_1\rho_0}&\\[.04in]
&=\, [\rho_0, (\rho_1\rho_2)^2]^{\rho_1\rho_0}&
\!\!\!=\, \big(((\rho_1\rho_2)^2)^{\rho_0}\,(\rho_1\rho_2)^2\big)^{\rho_1\rho_0}.
\end{array}\]
Now recall that $\Phi(A_1) \unlhd G$ and $(\rho_1\rho_2)^2 \in \Phi(A_1)$. Then any conjugate of $(\rho_1\rho_2)^2$ also lies in $\Phi(A_1)$, and so does the entire last term in the previous equation. It follows that $[(\rho_0\rho_1)^2, \rho_2]$ lies in $\Phi(A_1)$.
When $j>1$ the argument is simpler. In this case, $\Phi(A_j) \unlhd G$ and $(\rho_0\rho_1)^2 \in \Phi(A_j)$, so every conjugate of $(\rho_0\rho_1)^2$ also lies in $\Phi(A_j)$. Therefore,
\[ [(\rho_0\rho_1)^2, \rho_2]
=(\rho_0\rho_1)^2\,((\rho_0\rho_1)^2)^{\rho_2}  \in \Phi(A_j).\]
Thus $[(\rho_0\rho_1)^2, \rho_2]  \in C$, as desired.

Note that \eqref{comrhoC} also implies the more general statement that
\begin{equation}
\label{comrhoCj}
[(\rho_i\rho_{i+1})^2, \rho_{j}]\,\in\, C \quad (0\leq i<d-1;\, 0\leq j\leq d-1).
\end{equation}
In fact, by the commutation rules for string C-groups, this commutator is trivial if $j<i-1$ or $j>i+2$; and \eqref{comrhoC} covers the case $j=i+2$ of \eqref{comrhoCj}. For $j=i$ and $j=i+1$, we have
$[(\rho_i\rho_{i+1})^2, \rho_{j}]=(\rho_{i}\rho_{i+1})^4$, so then \eqref{comrhoCj} follows from the fact that $(\rho_{i}\rho_{i+1})^4\in C$, by \eqref{phiaigroups}. Finally, for $j=i-1$,
\[ [\rho_{i-1},(\rho_{i}\rho_{i+1})^2]
=[\rho_{i-1},\rho_{i}\rho_{i+1}\rho_{i}]
=[\rho_{i}\rho_{i-1}\rho_{i}, \rho_{i+1}]^{\rho_{i}}
=[(\rho_{i-1}\rho_{i})^2,\rho_{i+1}]^{\rho_{i-1}\rho_{i}}\,\in\, C\]
since $C$ is normal in $G$, and therefore also $[(\rho_{i}\rho_{i+1})^2,\rho_{i-1}]\in C$. This proves \eqref{comrhoCj}.

Next we investigate the quotient group $G/C$ of $G$ by its normal subgroup $C$. As the orders of $G$ and $C$ are $2^n$ and $2^{n-2d+1}$, respectively, it is clear that $G/C$ has the desired order, $2^{2d-1}$. We also
know from \eqref{phiaigroups} and \eqref{comrhoC} that the elements
\begin{equation}
\label{allrels}
(\rho_0\rho_1)^4, (\rho_1\rho_2)^4, \ldots, (\rho_{d-2}\rho_{d-1})^4,\,
 [(\rho_0\rho_1)^2, \rho_2], [(\rho_1\rho_2)^2, \rho_3], \ldots, [(\rho_{d-3}\rho_{d-2})^2, \rho_{d-1}]
 \end{equation}
all lie in $C$ and thus become trivial in $G/C$; the same holds more generally for all  elements of the form $[(\rho_i\rho_{i+1})^2,\rho_j]$ with $i=0,\ldots, d-2$ and $j=0,\ldots,d-1$. Then the presence of the first $d-1$ relators of the form $(\rho_{i}\rho_{i+1})^4$ in $C$ implies that $G/C$ is a quotient of the Coxeter group $[4,4,\ldots,4]$ with the diagram \eqref{Coxdia}; and the additional presence of the last $d-2$ relators of the form $[(\rho_i\rho_{i+1})^2, \rho_{i+2}]$ in $C$ (as well as the implied presence of all relators
$[(\rho_i\rho_{i+1})^2, \rho_{j}]$ in $C$) shows that $G/C$ is also a quotient of the automorphism group $G(\mathcal{C}_d)$ of the $d$-polytope $\mathcal{C}_d$ determined  by the relations implicit in \eqref{Coxdia} and \eqref{exrelscd}. On the other hand, the groups $G/C$ and $G(\mathcal{C}_d)$ have the same order. Thus,
\begin{equation}
\label{isogcd}
G/C \,\cong\, G(\mathcal{C}_d).
\end{equation}

Finally, the intersection property of $G/C$ follows from the intersection property of $G(\mathcal{C}_d)$. Thus $G/C$ is a string C-group of rank $d$ and the corresponding regular $d$-polytope is the polytope $\mathcal{C}_d$ described in the previous section. This settles the fourth part of the theorem.

For the final part of the theorem we need to show that $C=G_3$. First note that
\[ (\rho_{i+1}\rho_{i})^4 = [(\rho_i\rho_{i+1})^2, \rho_{i+1}] = [\rho_i, \rho_{i+1}, \rho_{i+1}] \in G_3\]
and
\[ [(\rho_i\rho_{i+1})^2, \rho_{i+2}] = [\rho_{i}, \rho_{i+1}, \rho_{i+2}] \in G_3,\]
so the elements of \eqref{allrels} not only lie in $C$ but also in $G_3$. Now let $N$ denote the normal closure of the elements from \eqref{allrels} in $G$. Then $N\leq C,G_3$, since $C$ and $G_3$ are normal in $G$. Moreover, $G/N\cong G(\mathcal{C}_d)$, by the definition of $G(\mathcal{C}_d)$; and $G(\mathcal{C}_d)\cong G/C$, by the fourth part of the theorem. Thus $C=N \leq G_3$.

It remains to show that $G_{3}\leq C$. We know from Section~\ref{secsmallest} that $G(\mathcal{C}_d)$ is of nilpotency class 2. Thus $G/N$ is of nilpotency class $2$. Interpreting this nilpotency class in terms of lower central series, we arrive at the following lower central series for $G/N$,
\[G/N\,\geq\, [G/N,G/N]\,\geq\, [[G/N,G/N],G/N] = \{1_{G/N}\} =\{N\}.\]
But $[G/N,G/N]=[G,G]N/N$ and $[[G,G]N/N,G/N]= [G,G,G]N/N$, so that the series becomes
\[G/N\,\geq\, [G,G]N/N\, \geq\,  [G,G,G]N/N = \{N\}.\]
Hence, $G_{3}=[G,G,G]\leq N=C$, as desired.

This completes the proof of the theorem.
\end{proof}
\medskip

In conclusion, it would be interesting to prove or disprove similar results for automorphism groups of other classes of symmetric polytopes of 2-power order, such as semi-regular, chiral, or two-orbit polytopes~\cite{Hu2010,HuSch2026+,MonSch2012,Pel2025}, or hypermaps or hypertopes ~\cite{FLW2016,JS1996}.
\bigskip

\subsection*{Acknowledgment}
We are grateful for the opportunity to discuss our research during Egon Schulte's visit at Beijing Jiaotong University and Shanxi Normal University in December 2025, hosted by Yan-Quan Feng and Dong-Dong Hou. Special thanks are due to Professor Feng for valuable discussions and support. The work was supported by the National Natural Science Foundation of China (12371022, 12271318, 12201371).

\end{document}